\newcommand{\comment}[1]{}
\title{Angular Convergence during B\'ezier Curve Approximation}
\author{J.Li\thanks{Department of Mathematics,
       University of Connecticut,Storrs, {\tt jili@math.uconn.edu}}
        \and
        T.J.Peters\thanks{Department of Computer Science and Engineering,
        Univeristy of Connecticut, Storrs, {\tt tpeters@cse.uconn.edu}}
        \and
        J.A.Roulier\thanks{Department of Computer Science and Engineering,
        Univeristy of Connecticut, Storrs, {\tt jrou@engr.uconn.edu}}}
\date{\today}
\begin{document}
\maketitle

\begin{abstract}
Properties of a parametric curve in $\mathbb{R}^3$ are often determined by analysis of its piecewise linear (PL) approximation. For B\'ezier curves, there are standard algorithms, known as subdivision, that  recursively create PL curves that converge to the curve in distance .  The {\em exterior angles} of PL curves under subdivision are shown to converge to $0$ at the rate of $O(\sqrt{\frac{1}{2^i}})$, where $i$ is the number of subdivisions. This angular convergence is useful for determining self-intersections and knot type.
\end{abstract}

\vspace{1ex}
\noindent
{\bf Keywords:} B\'ezier curve, subdivision, piecewise linear approximation, angular convergence.

\vspace{1ex}

\section{Introduction}
\label{sec:intro}

A B\'ezier curve is characterized by an indexed set of  points, which form a PL approximation of the curve\footnote{In the literature on B\'ezier curves, this PL approximation is called a {\em control polygon}.  However, to avoid confusion within this community, that terminology will be avoided here.}.  Subdivision algorithms recursively generate PL approximations that more closely approximate the curve under Hausdorff distance \cite{J.Munkres1999}.

Figure~\ref{fig:dec1} shows the first step of  a particular subdivision process, known as the de Casteljau algorithm.   The initial PL approximation $P$ is used as input to generate local PL approximations,  $P^1$ and $P^2$.  Their union, $P^1 \cup P^2$, is then a PL approximation whose Hausdorff distance is closer to the curve than that of $P$.  In this illustrative example, the construction relies upon generating midpoints of the segments of $P$, as indicated by providing $\frac{1}{2}$ as an input value to the subdivision algorithm\footnote{Other fractional values can be used, but the analysis given here proceeds by reliance upon $\frac{1}{2}$ and midpoints.  The details to change from midpoints are not substantive to the analysis presented here.}.

A brief overview is that subdivision proceeds by first creating the midpoint of each segment of $P$.  Then, these midpoints are connected to create new segments.
Recursive creation and connecting of midpoints continues until a final midpoint is created.  The union of the segments from the last step then forms a PL approximation.  Termination is guaranteed since $P$ has only finitely many segments.

\begin{figure}
\begin{center}\includegraphics[height=3cm]{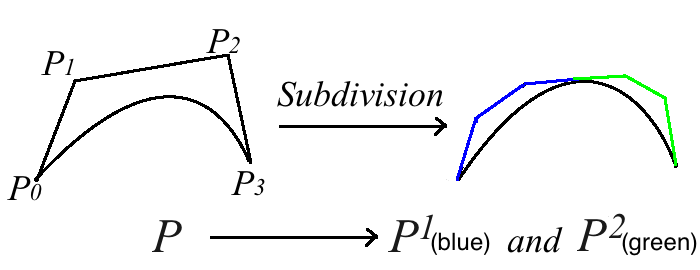}\end{center}
\caption{A subdivision}\label{fig:dec1}
\end{figure}

\begin{figure}
\begin{center}\includegraphics[height=3cm]{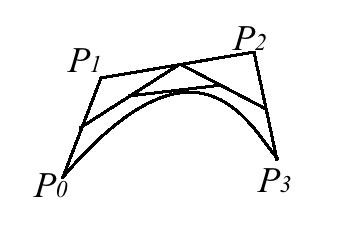}\end{center}
\caption{De Casteljau algorithm on $P$}\label{fig:dec2}
\end{figure}

This subdivision process is purely on PL geometry so that these techniques may be of interest to the computational geometry community.  
For subdivision, convergence of the PL curves to a B\'ezier curve under Hausdorff distance is well known \cite{PetersWu}, but, to the best of our knowledge, the convergence in terms of angular measure has not been previously established. The angular convergence is used to draw conclusions about non-self-intersection of the PL approximation\footnote{The authors thank J. Peters for conceptual insights offered during some informal conversatoins about the angular convergence. }.  Non-self-intersection is useful in determining more subtle topological properties, such as unknottedness and isotopic equivalence between a curve and this PL approximant \cite{Moore_Peters_Roulier2007}. This isotoptic equivalence has applications in computer graphics, computer animation and scientific visualization and the results presented here were discovered while
extending those previous theorems  \cite{Moore_Peters_Roulier2007}.  Further enhancements remain the subject of future research. 

For a simple\footnote{A curve is called {\em simple} if it is non-self-intersecting.} curve, it has been shown that the PL approximation under subdivision will eventually also become simple \cite{M.Neagu_E.Calcoen_B.Lacolle2000}. The proof relied upon use of the hodograph\footnote{The derivative of a B\'ezier curve is also expressed as a B\'ezier curve, known as the {\em hodograph} \cite{G.Farin1990}.} and is devoid of the constructive
geometric techniques used here.  That previous proof did not provide a specific rate of convergence, but the more geometric construction used here easily yields that convergence rate.

\section{Definitions and Notation}
Exterior angles were defined \cite{Milnor1950} for closed PL curves, but are adapted here for open curves.
\begin{defini}
\label{def:exterior_angles}
For an open PL curve with vertices $\{P_0,P_1,\ldots,P_n\}$ in $\mathbb{R}^3$,  denote the measures of the exterior angles formed by the oriented line segments to be:
\begin{center}
$\alpha_1, \ldots ,\alpha_{n-1}$ satisfying
\end{center}
\begin{center}
$0 \leq \alpha_m \leq \pi$ for $1 \leq m \leq n - 1.$ 
\end{center}
\end{defini}

For example, 
the exterior angle with measure $\alpha_m$ is formed by
$\overrightarrow{P_{m-1}P_m}$ and
$\overrightarrow{P_mP_{m+1}}$ and $0 \leq \alpha_m \leq \pi$, as shown in
Figure \ref{fig:alpha1}).  For these open PL curves, it is understood that the exterior angles are not defined at the initial and final vertices.

\begin{defini}
Denote $\mathcal{C}(t)$ as the parameterized B\'ezier curve of degree $n$ with control points $P_m \in\mathbb{R}^3$, defined by
$$
\mathcal{C}(t)=\sum_{m=0}^{n}{B_{m,n}(t)P_m},
t\in[0,1]
$$
where $B_{m,n}=\left(\!\!\!
  \begin{array}{c}
	n \\
	m
  \end{array}
  \!\!\!\right)t^m(1-t)^{n-m}$.
\end{defini}


Denote the uniform parametrization \cite{Morin_Goldman2001} of the PL curve $P$ by $l(P)_{[0,1]}$ over $[0,1]$, where $P=(P_0,P_1,\cdots,P_n)$. That is:
$$
l(P)_{[0,1]}(\frac{j}{n})=P_j\ for\ j=0,1,\cdots,n
$$
and $l(P)_{[0,1]}$ is piecewise linear.

\begin{defini}Discrete derivatives \cite{Morin_Goldman2001} are first defined at the points $t_j's$, where
$l(P)_{[0,1]}(t_j)=P_j$ for $j=0,1,\cdots,n-1$.
$$
P'_j=l'(P)_{[0,1]}(t_j)=\frac{P_{j+1}-P_j}{t_{j+1}-t_j}
$$
Denote $P'=(P'_0,P'_1,\cdots,P'_{n-1}).$ Then define the discrete derivative for $l(P)_{[0,1]}$ as:
$$
l'(P)_{[0,1]}=l(P')_{[0,1]}
$$

\end{defini}

Intuitively, the first discrete derivatives are similar to the tangent lines defined for univariate real-valued functions within a standard introductory calculus course.  


\begin{figure}
\begin{center}
\includegraphics[height=3cm]{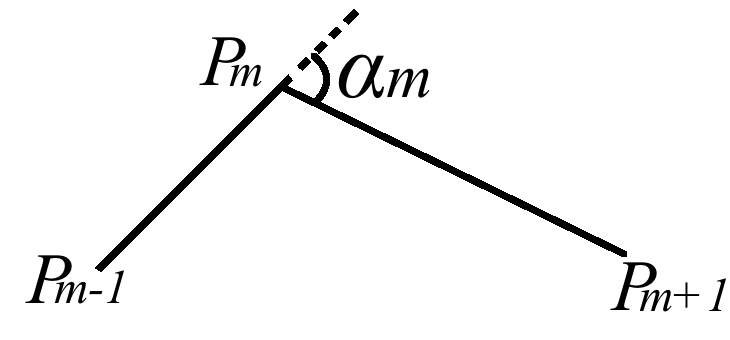}
\end{center}
\caption{The measure \textbf{$\alpha_m$} of an exterior angle}
\label{fig:alpha1}
\end{figure}

In order to avoid many annoying techincal considerations and to simplify the exposition, the class of B\'ezier curves considered will 
be restricted to those where the derivative never vanishes.

\begin{defini}
\label{def:regularity}
A differentiable curve is said to be regular if its derivative never vanishes.
\end{defini}

{\bf Throughout the rest of the presentation, the notation $\mathcal{C}$
will be used for a simple, $C^1$, regular\footnote{The astute reader will note that some of the development does
not require that the curve be regular.} B\'ezier curve of arbitrary degree $n$. And $i$ is the number of subdivisions. For convenience, the de Casteljau algorithm is assumed, with a fixed parameter $\frac{1}{2}$.}


\section{Angular Convergence}

For any B\'ezier curve, after $i$ iterations, the subdivision process generates $2^i$ open PL curves, whenever $i > 0$ (For $i = 0$, if the original PL curve formed from the control points is closed, then the associated B\'ezier curve is also closed.)  For the $i$-th subdivision, when $i > 0$ , denote each open PL approximation generated as
$$P^k=(P_0^k, P_1^k ,\ldots, P_n^k)$$ 
for $k = 1, 2, 3, \ldots, 2^i$. We consider an arbitrary $P^k$ for the following analysis, where, for simplicity of notation, we repress the superscript and denote this arbitrary curve simply as $P$, where $P$ has the corresponding parameters of the indicated control points by $t_0,t_1,\cdots,t_n$.  And let $l(P,i)$ be the uniform parameterization \cite{Morin_Goldman2001} of $P$ on  ${[\frac{k-1}{2^i},\frac{k}{2^i}]}$ $k \in \{ 1,2, 3, \ldots,2^i \}$. That is 
$$
l(P,i)=l(P)_{[\frac{k-1}{2^i},\frac{k}{2^i}]}\ \ \ and\ \ \  l(P,i)(t_m)=P_m
$$
Note from the domain of $l(P,i)$ that
\begin{align}
\label{eq:t} t_n-t_0=\frac{1}{2^i}
\end{align}
Furthermore, let
$$
\alpha_1,\alpha_2, \cdots, \alpha_{n-1}
$$
be the corresponding measures of
exterior angles of  $P$ (Definition
\ref{def:exterior_angles}).

Consider the Euclidian distance between the discrete derivatives of the two consecutive segments, that is $|l'(P,i)(t_m)-l'(P,i)(t_{m-1})|$, where $|\ |$ denotes Euclidean distance. We will show a rate of $O(\frac{1}{2^i})$ for the convergence
\begin{center}
$|l'(P,i)(t_m)-l'(P,i)(t_{m-1})| \rightarrow 0$ as $i \rightarrow \infty.$
\end{center} 
This will imply that $cos(\alpha_m) \rightarrow 1$ with the same rate and that  $\alpha_m \rightarrow 0$ at a rate of $O(\sqrt{\frac{1}{2^i}})$. Analogously we may imagine two connected segments in the x-y plane, and if their slopes are close, then their exterior angle is small. 

\begin{lemma}
\label{lem:derivative_diff}
For $\mathcal{C}$, the value $|l'(P,i)(t_m)-l'(P,i)(t_{m-1})|$ converges to zero at a rate of $O(\frac{1}{2^i})$. 
\end{lemma}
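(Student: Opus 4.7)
The plan is to reduce the statement to an $O(h^2)$ bound on second differences of control points, with $h = 1/2^i$. By equation~(\ref{eq:t}) and the uniform parametrization, $t_{m+1} - t_m = 1/(n \cdot 2^i)$ for every $m$, so
\begin{align*}
l'(P,i)(t_m) - l'(P,i)(t_{m-1}) \;=\; n \cdot 2^i \, (P_{m+1} - 2 P_m + P_{m-1}).
\end{align*}
It therefore suffices to prove $|P_{m+1} - 2 P_m + P_{m-1}| = O(1/4^i)$; the prefactor $n \cdot 2^i$ then yields the claimed $O(1/2^i)$ rate.

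For that estimate, I would use that $P_0, \ldots, P_n$ are exactly the B\'ezier control points of $\mathcal{C}$ restricted to $[a, a+h]$ with $a=(k-1)/2^i$, reparametrized linearly to $[0,1]$. Let $\mathcal{B}(s) = \mathcal{C}(a + sh)$, so $\mathcal{B}''(s) = h^2\,\mathcal{C}''(a+sh)$ by the chain rule. On the other hand, the standard Bernstein identity for the second derivative,
\begin{align*}
\mathcal{B}''(s) \;=\; n(n-1)\sum_{j=0}^{n-2} B_{j,n-2}(s)\,(P_{j+2} - 2P_{j+1} + P_j),
\end{align*}
identifies $n(n-1)(P_{m+1} - 2P_m + P_{m-1})$ (take $j = m-1$) as a Bernstein coefficient of the polynomial $h^2\,\mathcal{C}''(a+sh)$. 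Writing $g$ for the polar form (blossom) of $\mathcal{C}''$ in $n-2$ variables, each such coefficient equals $h^2\, g(\underbrace{a,\ldots,a}_{n-m-1},\underbrace{a+h,\ldots,a+h}_{m-1})$. Since $g$ is a fixed continuous multi-affine function and its arguments always lie in $[0,1]$ for every $i$, $k$, and $m$, these values are bounded by a constant $M$ depending only on $\mathcal{C}$, which gives $|P_{m+1} - 2P_m + P_{m-1}| \le M h^2/(n(n-1)) = O(1/4^i)$, completing the reduction.

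The principal obstacle is precisely the \emph{uniform} bound in the final step: mere pointwise smallness of $\mathcal{B}''$ does not control individual Bernstein coefficients, which in general can be much larger than the sup-norm of the underlying polynomial. What rescues us is that $\mathcal{B}''$ is not an arbitrary small polynomial but a rescaled reparametrization of the single fixed polynomial $\mathcal{C}''$, a structural fact captured cleanly by the blossom. If the authors prefer to avoid polar forms, one can substitute a Taylor expansion of $\mathcal{C}$ at $a$ together with the classical relations between one-sided control-point differences and derivatives of $\mathcal{C}$ at the interval endpoints, at the price of heavier bookkeeping.
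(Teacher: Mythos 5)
Your proof is correct, but it takes a genuinely different route from the paper's. The paper's proof is a three-term triangle inequality: it inserts the true derivatives $\mathcal{C}'(t_m)$ and $\mathcal{C}'(t_{m-1})$ between the two discrete derivatives, quotes the result of \cite{Morin_Goldman2001} that each discrete derivative converges to the corresponding true derivative at rate $O(\frac{1}{2^i})$, and bounds the middle term $|\mathcal{C}'(t_m)-\mathcal{C}'(t_{m-1})|$ by the Lipschitz constant of $\mathcal{C}'$ times $|t_m-t_{m-1}|\le\frac{1}{2^i}$. You instead rewrite the difference of discrete derivatives exactly as $n2^i(P_{m+1}-2P_m+P_{m-1})$ and bound the second difference of control points by $O(h^2)$ via the blossom of $\mathcal{C}''$. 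Your version is self-contained (no appeal to the cited convergence theorem), produces an explicit constant $\frac{Mh}{n-1}$ with $M=\sup_{[0,1]^{n-2}}|g|$, and exploits the polynomial structure of $\mathcal{C}$; the paper's version is shorter modulo the citation and needs only that $\mathcal{C}'$ is Lipschitz, so it would survive for non-polynomial $C^{1,1}$ curves. One remark on your ``principal obstacle'': since the degree $n-2$ is fixed, the Bernstein coefficients of a polynomial are in fact controlled by its sup-norm up to a constant depending only on $n$ (equivalence of norms in finite dimensions), so the cruder estimate $\|\mathcal{B}''\|_{\infty,[0,1]}\le h^2\|\mathcal{C}''\|_{\infty,[0,1]}$ would also suffice; the blossom just makes the constant transparent. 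Your argument tacitly assumes $n\ge 2$, which is harmless since for $n\le 1$ there are no exterior angles and the statement is vacuous.
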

\begin{proof}
$$|l'(P,i)(t_m)-l'(P,i)(t_{m-1})|$$
$$\leq |l'(P,i)(t_m)-\mathcal{C}'(t_m)|+|\mathcal{C}'(t_m)-\mathcal{C}'(t_{m-1})|+$$
$$|\mathcal{C}'(t_{m-1})-l'(P,i)(t_{m-1})|$$
The first and the third terms
converge to $0$ at a rate of
$O(\frac{1}{2^i})$, because the discrete
derivative converges to the derivative of the original curve with this
rate \cite{Morin_Goldman2001}.

Now consider the convergence of the second term. Since $\mathcal{C}'$
satisfies the Lipschitz condition because of it being continuously
differentiable, we
have
$$|\mathcal{C}'(t_m)-\mathcal{C}'(t_{m-1})| \leq \gamma |t_m-t_{m-1}| \leq
\frac{\gamma}{2^i}$$
for some constant $\gamma$, where $\gamma$ does not depend on $t_m$ or $t_{m-1}$. The second inequality holds by the Equation (\ref{eq:t}).
Therefore $|l'(P,i)(t_m)-l'(P,i)(t_{m-1})|$ converges to zero
at a rate of $O(\frac{1}{2^i})$.
\end{proof}


\begin{theorem}[Angular convergence]\label{thm:curvature_conv}
For $\mathcal{C}$, each exterior angle of the PL curves generated by subdivision converges to $0$ at a rate of $O(\sqrt{\frac{1}{2^i}})$.
\end{theorem}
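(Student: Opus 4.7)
The plan is to identify the exterior angle $\alpha_m$ with the angle between two discrete derivative vectors and then convert the Euclidean bound from Lemma~\ref{lem:derivative_diff} into an angular bound using the law of cosines. Writing $v_1 := l'(P,i)(t_{m-1})$ and $v_2 := l'(P,i)(t_m)$, each $v_j$ is a positive scalar multiple of one of the edge vectors $\overrightarrow{P_{m-1}P_m}$ or $\overrightarrow{P_m P_{m+1}}$, so the angle between $v_1$ and $v_2$ is exactly $\alpha_m \in [0,\pi]$. Lemma~\ref{lem:derivative_diff} supplies $|v_2 - v_1| \le c \cdot 2^{-i}$ for a constant $c$ independent of $k$ and $m$.

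Before invoking the law of cosines I would secure uniform upper and lower bounds on $|v_1|$ and $|v_2|$. Since $\mathcal{C}$ is $C^1$ and regular on the compact interval $[0,1]$, the function $|\mathcal{C}'|$ has a strictly positive minimum $\delta$ and a finite maximum $M$. The Morin--Goldman estimate already invoked in Lemma~\ref{lem:derivative_diff} gives $|v_j - \mathcal{C}'(t_j)| = O(2^{-i})$ uniformly in $k,m$, so for $i$ sufficiently large we obtain $\delta/2 \le |v_j| \le 2M$ uniformly.

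Now apply the identity
\begin{equation*}
1 - \cos\alpha_m \;=\; \frac{|v_2 - v_1|^2 - (|v_1| - |v_2|)^2}{2\,|v_1|\,|v_2|} \;\le\; \frac{|v_2 - v_1|^2}{2\,|v_1|\,|v_2|},
\end{equation*}
which together with the preceding bounds yields $1 - \cos\alpha_m = O(2^{-i})$. Combining this with the elementary inequality $1 - \cos\alpha \ge (2/\pi^{2})\,\alpha^{2}$ for $\alpha \in [0,\pi]$ delivers $\alpha_m^{2} = O(2^{-i})$, i.e., the advertised rate $\alpha_m = O\bigl(\sqrt{2^{-i}}\bigr)$. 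I note in passing that the displayed inequality in fact yields the stronger bound $1 - \cos\alpha_m = O(4^{-i})$, so a tighter argument actually gives $\alpha_m = O(2^{-i})$; only the weaker conclusion is needed here.

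The main obstacle is uniformity: every constant that appears above must be independent of the particular subdivided piece $P^k$ and of the interior index $m$, since otherwise iterating across $k = 1,\ldots,2^{i}$ could erase the exponential decay. Uniformity of the discrete-derivative convergence is an intrinsic feature of $\mathcal{C}$, arising from a global Lipschitz bound on $\mathcal{C}'$ (as already exploited in Lemma~\ref{lem:derivative_diff}), and the uniform positive lower bound $\delta$ on $|\mathcal{C}'|$ is guaranteed by regularity together with compactness of $[0,1]$. Once these uniform estimates are in place, the remainder of the argument is routine bookkeeping.
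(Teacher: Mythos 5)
Your proof is correct, and although it shares the paper's skeleton (Lemma~\ref{lem:derivative_diff} together with a uniform positive lower bound on the discrete derivatives coming from regularity and compactness of $[0,1]$), the two decisive steps are carried out by a genuinely different route --- and your version is quantitatively sharper. The paper bounds $|\cos\alpha_m-1|$ by inserting $\pm v_i\cdot v_i$ into the numerator of $\frac{u_i\cdot v_i}{|u_i||v_i|}-1$ and applying the triangle inequality, which is linear in $|u_i-v_i|$ and so gives only $O(\frac{1}{2^i})$; it then recovers $\alpha_m$ from $\cos\alpha_m$ via a power-series manipulation (comparing the tail of the cosine series to $e$) to land on $\alpha_m=O(\sqrt{\frac{1}{2^i}})$. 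Your law-of-cosines identity instead exploits the exact cancellation $|v_2-v_1|^2-(|v_1|-|v_2|)^2=2|v_1||v_2|(1-\cos\alpha_m)$, so the numerator is \emph{quadratic} in $|v_2-v_1|$ and you obtain $1-\cos\alpha_m=O(\frac{1}{4^i})$; combined with the clean inequality $1-\cos\alpha\ge\frac{2}{\pi^2}\alpha^2$ on $[0,\pi]$ (which follows from the monotonicity of $\frac{\sin x}{x}$ and replaces the paper's fiddly series estimate), this yields $\alpha_m=O(\frac{1}{2^i})$, strictly stronger than the stated rate. Your explicit attention to uniformity of the constants over the piece index $k$ and the vertex index $m$ is also warranted: the paper leaves that point implicit, relying on the global Lipschitz constant $\gamma$ and the uniformity of the Morin--Goldman estimate.
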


\begin{proof}
Since $\mathcal{C}(t)$ is assumed to be regular and $C^1$, the non-zero minimum of $|\mathcal{C}'(t)|$ over the compact set [0,1] is obtained. For brevity, the notations of $u_i  = l'(P,i)(t_m)$ and $v_i = l'(P,i)(t_{m-1})$ are introduced. The convergence of $u_i$ to $\mathcal{C}'(t_m)$ \cite{Morin_Goldman2001} implies that $|u_i|$ has a positive lower bound for $i$ sufficiently large, denoted by $\lambda$.

Lemma~\ref{lem:derivative_diff} gives that
$|u_i-v_i| \rightarrow 0$ as $i \rightarrow \infty$ at a rate of  $O(\frac{1}{2^i})$. This implies:
$|u_i|-|v_i| \rightarrow 0$ as $i \rightarrow \infty$ at a rate of  $O(\frac{1}{2^i})$.


Consider the following where the multiplication between vectors is dot product:
$$|cos(\alpha_m)-1|=|\frac{u_iv_i}{|u_i||v_i|}-1|$$
$$=|\frac{u_iv_i-v_iv_i+v_iv_i-|u_i||v_i|}{|u_i||v_i|}|$$
$$\leq \frac{|u_i-v_i|+||v_i|-|u_i||}{|u_i|} \leq \frac{|u_i-v_i|+||v_i|-|u_i||}{\lambda}$$
It follows from Lemma~\ref{lem:derivative_diff} that the right hand side converges to $0$ at a rate of  $O(\frac{1}{2^i})$. Consequently by the above inequality $|cos(\alpha_m)-1| \rightarrow 0$ with the same rate.

It follows from the continuity of $arccos$ that $\alpha_m$ converges to $0$ as $i \rightarrow \infty$.



Taking the power series expansion of $cos$ we get
$$|cos(\alpha_m)-1|
\geq (\alpha_m)^2 \cdot (\frac{1}{2}-|\frac{(\alpha_m)^2}{4!}-\frac{(\alpha_m)^4}{6!}+\cdots|)$$
Considering the expression on the right hand side of the previous inequality, note that for $1 > \alpha_m$, 
$$ e = 1+1+\frac{1}{2!}+\frac{1}{3!}+\frac{1}{4!}+\cdots > |\frac{1}{4!}-\frac{(\alpha_m)^2}{6!}+\cdots|.$$
For any $0< \tau <\frac{1}{2}$, sufficiently many subdivisions will guarantee that $\alpha_m$ is small enough such that
$1 > \alpha_m$ and $\tau > (\alpha_m)^2 \cdot e$. Then 
$$\tau >  (\alpha_m)^2 \cdot e > (\alpha_m)^2 \cdot |\frac{1}{4!}-\frac{(\alpha_m)^2}{6!}+\cdots| .$$
So $$|cos(\alpha_m)-1| > (\alpha_m)^2 \cdot (\frac{1}{2} - \tau)>0.$$
Then convergence of the left hand side implies that $\alpha_m$ converges to $0$ at a rate of $O(\sqrt{\frac{1}{2^i}})$.
\end{proof}

\section{Non-self-intersections from Subdivision}
\label{sec:simple-locally}

Even though the original PL approximant might not be simple, if the B\'ezier curve is simple, then subdivision eventually produces a PL approximant that is simple  \cite{M.Neagu_E.Calcoen_B.Lacolle2000}.  So, there must exist some value of $i$ such that, for the $i$-th subdivision each of the PL curves output as $(P_0^k, P_1^k ,\ldots, P_n^k)$ for $k = 1, 2, 3, \ldots, 2^i$ must also be simple.  Consistent with the approach taken here, that result will now be shown by a purely PL geometric construction which does not rely upon the hodograph of the B\'ezier curve.

Lemma~\ref{lem:non-int} is similar to one previously proven \cite{M.Neagu_E.Calcoen_B.Lacolle2000}, where the angles in the previous publication were defined over
a different range of values than used here.


The previous definition of exterior angles for open curves (Definition~\ref{def:exterior_angles}) was noted as a specialization of the original use for closed curves, where it was created to unify the concept of total curvature for closed curves that were PL or differentiable.   

\begin{defini}
The curvature of  $\mathcal{C}$ is given by 
$$\kappa(t)=\frac{||\mathcal{C}'(t) \times \mathcal{C}''(t)||}{||\mathcal{C}'(t)||^3}, \quad t \in [0,1]$$
Its total curvature \textup{\cite{DoCarmo1976}} is the integral: $\int_0^1 | \kappa(t)| \ dt.$
\end{defini}

\begin{figure}
\begin{center}
\includegraphics[height=4cm]{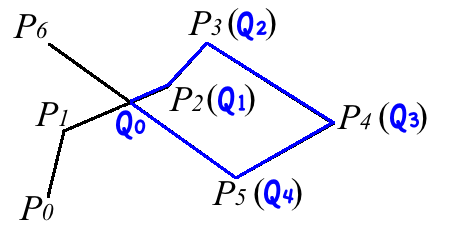}
\end{center}
\caption{A self-intersecting PL curve}
\label{fig:self-int}
\end{figure}

\begin{defini}\textup{\cite{Milnor1950}}
\label{def:dcurva}
For a closed PL curve $\bar{P}$ in $\mathbb{R}^3$, formed from points $P_0, P_1, \ldots, P_n$, its total curvature $\kappa(\bar{P})$ is defined as
$$
\kappa(\bar{P})=\sum_{m=0}^{n}{\alpha_m},.
$$
where $\alpha_0$ and $\alpha_m$ are both defined in the interval $[0, \pi]$, where $\alpha_0$ is formed from the edges $\overrightarrow{P_{n}P_0}$ and $\overrightarrow{P_{0}P_1}$, while  $\alpha_n$ is formed from the edges $\overrightarrow{P_{n-1}P_n}$ and $\overrightarrow{P_{n}P_0}$
\end{defini}

The following Fenchel's Theorem \cite{DoCarmo1976} is applicable both to PL curves and to differentiable curves.

\begin{theorem} \textup{\cite{Milnor1950}} \label{thm:gen-fenchel}
The total curvature of any closed curve is at least $2 \pi$, with equality holding if and only if the curve is convex.
\end{theorem}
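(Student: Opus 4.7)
The plan is to follow Milnor's classical tangent-indicatrix argument, which handles the smooth and PL cases uniformly. The idea is to associate to any closed curve an auxiliary closed curve on the unit sphere $S^2$ whose arc length is exactly the total curvature, and then reduce the problem to a purely spherical length inequality. For a smooth closed curve $\mathcal{C}$, I would define the \emph{tangent indicatrix} $T(t)=\mathcal{C}'(t)/\|\mathcal{C}'(t)\|$; a short computation in arc-length parametrization shows that the spherical length of $T$ equals the total curvature of $\mathcal{C}$. For a closed PL curve with vertices $P_0,\ldots,P_n$, the analog is the sequence of unit edge vectors $e_m = (P_m-P_{m-1})/|P_m-P_{m-1}|$, joined on $S^2$ by minimal geodesic arcs; the arc from $e_m$ to $e_{m+1}$ has length equal to the exterior angle $\alpha_m$, so the total spherical length is $\sum_m \alpha_m$, matching Definition~\ref{def:dcurva}.

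Next I would show that this spherical image cannot be contained in any open hemisphere. If there were a unit vector $v$ with $T(t)\cdot v > 0$ for all $t$, then $\mathcal{C}(t)\cdot v$ would be strictly monotonic in the smooth case, and in the PL case one would get $(P_n-P_0)\cdot v = \sum_m (P_m-P_{m-1})\cdot v > 0$; either way this contradicts $\mathcal{C}$ being closed. The theorem then reduces to a purely spherical statement: any closed rectifiable curve on $S^2$ not contained in any open hemisphere has length at least $2\pi$, with equality only for a great circle. The standard proof of this spherical lemma selects two points $p,q$ bisecting the indicatrix into subarcs of equal length and uses the hypothesis together with an antipodal reflection argument to force each subarc to have length at least $\pi$.

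For the equality case, total curvature exactly $2\pi$ forces the spherical image to be a great circle; in the smooth case this places $\mathcal{C}'$ in a fixed plane, so $\mathcal{C}$ is a planar convex curve, and in the PL case the edge directions lie on a single great circle, from which planarity and convexity of the polygon follow. The main obstacle I anticipate is the spherical length lemma, particularly its equality case: the bisection-plus-reflection argument yields the inequality with manageable effort, but upgrading ``length $=2\pi$'' to ``indicatrix is a great circle'' (ruling out degenerate configurations passing through an antipodal pair) requires extra care beyond what the inequality itself needs.
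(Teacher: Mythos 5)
The paper does not actually prove this statement: it is quoted as a classical result of Fenchel, with a citation to \cite{Milnor1950}, and is used downstream only as a black box (in the proof of Lemma~\ref{lem:non-int}). So there is no internal proof to match against. Your tangent-indicatrix route is the standard Fenchel-style argument (essentially the one in \cite{DoCarmo1976}), and your sketch of the inequality is sound in both the smooth and PL cases: the total curvature equals the spherical length of the indicatrix, closedness prevents the indicatrix from lying in an open hemisphere, and the bisection-plus-reflection (Horn) lemma then gives length at least $2\pi$. It is worth noting that the proof in the source the paper actually cites is different in spirit: Milnor's argument is integral-geometric, showing that $\kappa$ equals $2\pi$ times the average over $b\in S^2$ of the number of local maxima of the height function $x\mapsto\langle x,b\rangle$, and observing that this number is at least $1$ for every direction. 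That route buys the equality case almost for free ($\kappa=2\pi$ forces a single maximum and minimum in almost every direction, hence planarity and convexity) and, more importantly for this paper, delivers the Fary--Milnor theorem (Theorem~\ref{Fary-Milnor_thm}) with no additional machinery.

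The one genuine gap in your proposal is the equality case, and you have correctly located it without closing it. The purely spherical statement ``a closed curve on $S^2$ of length exactly $2\pi$ that is not contained in any open hemisphere must be a great circle'' is false: the union of two distinct great semicircles joining an antipodal pair is a counterexample, and for a PL curve the indicatrix genuinely can pass through antipodal points (consecutive edges may reverse direction). So the reduction to the sphere loses exactly the information needed for the characterization of equality, and one must return to the curve itself --- for instance via the height-function count above --- to conclude convexity. Since the paper only ever uses the inequality $\kappa\ge 2\pi$, this unfinished part has no effect on anything downstream, but as a proof of the theorem as stated it is incomplete.
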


\begin{lemma}\label{lem:non-int}
Let $P=(P_0,P_1,\cdots,P_n)$ be an open PL curve in $\mathbb{R}^3$.  If \hspace{1ex} $\sum_{j=1}^{n-1}{\alpha_j}<\pi$, then $P$ is simple.
\end{lemma}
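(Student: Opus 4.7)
The plan is to argue by contradiction using Fenchel's theorem (Theorem~\ref{thm:gen-fenchel}). Suppose $P$ is not simple. Then there exist parameter values $s < t$ whose images on $P$ coincide at some common point $Q \in \mathbb{R}^3$. We may choose $s$ to lie on segment $\overline{P_{j_1}P_{j_1+1}}$ and $t$ to lie on $\overline{P_{j_2}P_{j_2+1}}$ with $j_1 < j_2$. The genuinely pathological case, where two adjacent segments overlap along a subsegment, forces the direction to reverse at the shared vertex and thus $\alpha_{j_1+1} = \pi$, which already contradicts the hypothesis; all remaining self-intersections fall into the configuration above, possibly after noting that $Q$ may coincide with a vertex of $P$ (which we treat as a limit of the generic case).

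Next, I would form the closed PL curve $\bar P$ consisting of the consecutive vertices
$$Q,\ P_{j_1+1},\ P_{j_1+2},\ \ldots,\ P_{j_2},\ Q.$$
The key geometric observation is that at every vertex $P_{j_1+k}$ with $1 \le k \le j_2 - j_1$, the exterior angle of $\bar P$ is identical to the exterior angle $\alpha_{j_1+k}$ of $P$. Indeed, at $P_{j_1+1}$ the edge $\overrightarrow{QP_{j_1+1}}$ is parallel (and codirectional) to $\overrightarrow{P_{j_1}P_{j_1+1}}$ because $Q$ lies on $\overline{P_{j_1}P_{j_1+1}}$; the symmetric statement holds at $P_{j_2}$; and the intermediate vertices are unaffected. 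Let $\beta$ denote the exterior angle of $\bar P$ at $Q$ itself; by Definition~\ref{def:exterior_angles} we have $0 \le \beta \le \pi$.

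Finally, Fenchel's theorem applied to $\bar P$ gives
$$\kappa(\bar P) \;=\; \beta + \sum_{j=j_1+1}^{j_2} \alpha_j \;\ge\; 2\pi.$$
Since $\beta \le \pi$, it follows that $\sum_{j=j_1+1}^{j_2} \alpha_j \ge \pi$. As these angles form a subset of $\{\alpha_1,\ldots,\alpha_{n-1}\}$ (using $1 \le j_1+1$ and $j_2 \le n-1$), one gets $\sum_{j=1}^{n-1}\alpha_j \ge \pi$, contradicting the hypothesis.

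The main obstacle I anticipate is not the Fenchel step itself, which is clean, but rather the careful bookkeeping of degenerate cases: the self-intersection point $Q$ coinciding with an existing vertex, two adjacent segments overlapping, or $\bar P$ reducing to a doubly-traced segment. Each of these either falls through to the generic argument verbatim once one notes that inserting a collinear auxiliary vertex on an edge does not alter adjacent exterior angles, or else already forces some $\alpha_j = \pi$ and hence contradicts the hypothesis immediately.
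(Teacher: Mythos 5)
Your proposal is correct and follows essentially the same argument as the paper: assume a self-intersection, isolate the closed loop through the intersection point $Q$, observe that inserting $Q$ on an edge leaves the adjacent exterior angles of $P$ unchanged, apply Fenchel's theorem to get total curvature at least $2\pi$, and use $\beta \le \pi$ to force $\sum \alpha_j \ge \pi$, a contradiction. Your handling of the degenerate adjacent-overlap case (forcing some $\alpha_j = \pi$) matches the paper's remark on the same subtlety.
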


\begin{proof}
Assume to the contrary that $P$ is self-intersecting.
Let $k$ be the lowest index for which the segment $P_{k-1}P_k$  intersects an earlier segment $P_{i-1}P_i$  for $i<k$. Consider the PL curve
$P_{i -1} \ldots P_k$ and isolate the single closed loop defined by the intersection as in Figure~\ref{fig:self-int}, where the designated intersection point is labeled as $Q_0$.  Denote this single closed loop by $\bar{Q}=(Q_0,Q_1,\cdots,Q_{n'},Q_0)$, for an appropriately chosen value of $n'$. Denote the measure of
the exterior angle of $\bar{Q}$ at $Q_0$ by $\beta_0$, where $\beta_0 \leq \pi$ (Definition~\ref{def:exterior_angles}).
Let $\kappa_l$ denote the total curvature of this closed loop, where $\kappa_l \geq 2 \pi$ (Theorem~\ref{thm:gen-fenchel}). Then, faithfully index
the remaining angles by an oriented traverse of $\bar{Q}$ such that each exterior angle has, respectively, measure $\beta_j$ for $j = 1, \ldots, n'.$
  Note that for $j \geq 1$, each $\beta_j$ is equal to some $\alpha_i$.

Since $\bar{Q}$ is a portion of $P$, we have
$$\sum_{i=1}^{n-1}{\alpha_j}  > \sum_{j=1}^{n'}{\beta_j}.$$
Note also that $\sum_{j=0}^{n'}{\beta_j}\geq 2 \pi$ (Theorem~\ref{thm:gen-fenchel}). But, then since $\beta_0 \leq \pi$, it follows that $\sum_{j=1}^{n'}{\beta_j} \geq \pi$ and $\sum_{i=1}^{n-1}{\alpha_j}\geq \pi$, which is a contradiction.

Two subtleties of the proof are worth mentioning.  First, the assumption that each angle $\alpha_i < \pi$ precludes the case of two
consecutive edges intersecting at more than a single point. 
Secondly, the choice of $k$ as the lowest index is sufficiently general to include the case where two non-consecutive edges are coincident.
\end{proof}

\pagebreak

\begin{theorem}\label{thm:non-self-int-sub}
For $\mathcal{C}$, there exists a sufficiently large value of $i$, such that after $i$-many subdivisions, each of the PL curves generated as $P^k=(P_0^k, P_1^k ,\ldots, P_n^k)$ for
$k = 1, 2, 3, \ldots, 2^i$ will be simple.
\end{theorem}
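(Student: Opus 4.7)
The plan is to combine the angular convergence theorem (Theorem~\ref{thm:curvature_conv}) with the purely combinatorial criterion for non-self-intersection given by Lemma~\ref{lem:non-int}. Since each $P^k$ has exactly $n-1$ exterior angles $\alpha_1^k,\ldots,\alpha_{n-1}^k$, and the degree $n$ is a fixed constant independent of $i$, it suffices to drive each individual $\alpha_m^k$ below $\pi/(n-1)$; then the hypothesis $\sum_{m=1}^{n-1}\alpha_m^k < \pi$ of Lemma~\ref{lem:non-int} will be satisfied for every $k$, and each piece will be simple.

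First I would appeal to Theorem~\ref{thm:curvature_conv} applied to a fixed $P^k$: it gives $\alpha_m^k = O(\sqrt{1/2^i})$. Thus for any prescribed $\epsilon > 0$ one can pick $i$ sufficiently large so that $\alpha_m^k < \epsilon$; choosing $\epsilon = \pi/n$ yields $\sum_{m=1}^{n-1}\alpha_m^k < (n-1)\pi/n < \pi$, and Lemma~\ref{lem:non-int} concludes that $P^k$ is simple.

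The one subtlety, and what I expect to be the main obstacle, is uniformity of the bound over the $2^i$ pieces $P^1,\ldots,P^{2^i}$ produced at the $i$-th level. Naively, Theorem~\ref{thm:curvature_conv} is stated for ``an arbitrary $P^k$,'' but the implicit constant in the $O(\sqrt{1/2^i})$ rate must not depend on $k$ if we want a single $i$ to handle all pieces simultaneously. Revisiting the proof of Lemma~\ref{lem:derivative_diff}, the Lipschitz constant $\gamma$ of $\mathcal{C}'$ is a global constant on $[0,1]$, and in the proof of Theorem~\ref{thm:curvature_conv} the lower bound $\lambda$ on $|\mathcal{C}'(t)|$ is likewise obtained over the compact set $[0,1]$, hence is global. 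Neither depends on which subinterval $[(k-1)/2^i,\,k/2^i]$ we restrict to, so the constant in $O(\sqrt{1/2^i})$ is indeed uniform in $k$.

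With this uniformity in hand, the proof is immediate: choose $i$ so large that each exterior angle of every $P^k$ is smaller than $\pi/n$; then $\sum_{m=1}^{n-1}\alpha_m^k < \pi$ for every $k = 1,\ldots,2^i$, and Lemma~\ref{lem:non-int} delivers the simplicity of each $P^k$ as required.
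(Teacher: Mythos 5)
Your proof is correct and follows essentially the same route as the paper: invoke Theorem~\ref{thm:curvature_conv} to make each of the $n-1$ exterior angles of every piece small enough that their sum is below $\pi$, then apply Lemma~\ref{lem:non-int}. Your explicit check that the constants $\gamma$ and $\lambda$ are global on $[0,1]$, so the rate is uniform over all $2^i$ pieces, is a worthwhile clarification that the paper leaves implicit.
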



\begin{proof}
Since the measure of each exterior angle converges to zero as $i$ increases (Theorem~\ref{thm:curvature_conv})
and since each open $P^k=(P_0^k, P_1^k ,\ldots, P_n^k)$ has $n - 1$ edges, there exists $i$ sufficiently large such that
$$ \sum_{j = 1}^{n - 1} \alpha_j^k  < \pi,$$ for each $k = 1, 2, 3, \ldots, 2^i.$
Use of Lemma~\ref{lem:non-int} completes the proof.
\end{proof}

\section{Conclusions and Future Work}


Total curvature is fundamental for determining knot type, as applicable to both PL curves and differentiable curves.

\begin{theorem}
\label{Fary-Milnor_thm}
Fary-Milnor Theorem \textup{\cite{Milnor1950}}:
If the total curvature of a
simple closed curve is less than or equal to
$4\pi$, then it is unknotted.
\end{theorem}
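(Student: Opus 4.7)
The plan is to follow Milnor's original integral-geometric strategy. Write $\gamma$ for the given simple closed curve. For each unit vector $v \in S^2$, let $h_v : \gamma \to \mathbb{R}$ denote the linear height function $h_v(p) = \langle p, v\rangle$, and let $\mu(v)$ be the number of local maxima of $h_v$ on $\gamma$ (well defined for almost every $v$). The proof reduces knottedness of $\gamma$ to the existence of a single direction $v$ with $\mu(v) = 1$.

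The first and most delicate step is to establish a Crofton-type identity
\begin{equation*}
\int_\gamma \kappa \, ds \;=\; \frac{1}{2} \int_{S^2} \mu(v) \, dA(v),
\end{equation*}
where $dA$ is the standard area element on $S^2$. I would derive this either by identifying the left-hand side with the spherical length of the tangent indicatrix $t \mapsto \gamma'(t)/|\gamma'(t)|$ and invoking Crofton's length formula on $S^2$, or by a direct Fubini argument on the incidence set $\{(t,v) : h_v'(t) = 0\}$, together with the observation that on a closed curve the critical points of $h_v$ split evenly into maxima and minima so that the total critical count is $2\mu(v)$. This identity is the main obstacle; once in hand, the remainder of the proof is essentially counting.

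Combining the identity with the hypothesis $\int_\gamma \kappa \, ds \leq 4\pi$ yields $\int_{S^2} \mu(v)\,dA(v) \leq 8\pi$, so the average of $\mu$ over $S^2$ is at most $2$. Since any closed curve has at least one maximum in each generic direction, $\mu(v) \geq 1$ almost everywhere; as $\mu$ is integer-valued, the set $\{v : \mu(v) = 1\}$ must have positive measure. The borderline case $\mu \equiv 2$ a.e. (which can occur only when the total curvature equals $4\pi$ exactly) is excluded by a short perturbation argument: slightly smoothing or translating $\gamma$ strictly decreases the integral, and knottedness is invariant under small ambient isotopy. Fix a direction $v_0$ with $\mu(v_0) = 1$.

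For this $v_0$, the height $h_{v_0}$ on $\gamma$ has exactly one local maximum and therefore exactly one local minimum, so $\gamma$ decomposes as the union of two arcs on each of which $h_{v_0}$ is strictly monotone. Every intermediate level plane $\{h_{v_0} = c\}$ meets $\gamma$ in exactly two points, and joining these points by a straight segment produces a one-parameter family of segments whose union is an embedded topological disk in $\mathbb{R}^3$ with boundary $\gamma$. The existence of such a spanning disk shows that $\gamma$ is unknotted, completing the argument.
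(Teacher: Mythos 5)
The paper does not actually prove this theorem; it is quoted from Milnor's 1950 paper and used as a black box, so there is no internal proof to compare against. Your outline reproduces Milnor's original integral-geometric strategy, and most of it is sound: the Crofton-type identity $\int_\gamma \kappa \, ds = \frac{1}{2}\int_{S^2}\mu(v)\,dA(v)$ has the right constant (a round circle gives $2\pi$ on both sides), the even splitting of critical points into maxima and minima for generic $v$ is correct, and the chord construction for a direction with $\mu(v_0)=1$ does yield an embedded spanning disk, since chords at distinct heights lie in disjoint parallel planes and each chord meets $\gamma$ only at its endpoints.

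The genuine gap is your treatment of the borderline case, which is exactly where the hypothesis ``$\leq 4\pi$'' (rather than ``$< 4\pi$'') puts all the difficulty. When the total curvature equals $4\pi$, the averaging argument only gives the dichotomy: either $\{v : \mu(v)=1\}$ has positive measure, or $\mu(v)=2$ for almost every $v$. Your proposed exclusion of the second alternative does not work. Translating $\gamma$ is an isometry and leaves the total curvature unchanged, and a small smoothing or perturbation need not strictly decrease total curvature --- total curvature is not generically decreasable (consider the round circle, a strict minimizer among closed curves). More fundamentally, the claim you need --- that a knotted curve with total curvature exactly $4\pi$ admits an arbitrarily small knot-preserving isotopy strictly decreasing the total curvature --- is essentially equivalent to the strictness assertion being proved: a priori, a knotted curve could be a minimizer of total curvature in its isotopy class at exactly $4\pi$, and ruling out such minimizers is the real content of the equality case. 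Milnor handles it by a dedicated argument showing that the infimum $2\pi \cdot (\text{bridge number})$ of total curvature over a nontrivial knot class is never attained; equivalently, $\mu(v)=2$ for almost every $v$ cannot occur for a simple closed curve. Without an argument of that kind, what you have proved is only the weaker statement that total curvature strictly less than $4\pi$ implies unknottedness.
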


These local topological properties of the PL approximation for a B\'ezier curve have been instrumental to showing isotopic equivalence between $\mathcal{C}$ and the polyline approximation generated by subdivision for low degree B\'ezier curves. Efforts are ongoing to extend that isotopic equivalence to higher degree B\'ezier curves.  


\small
\bibliographystyle{abbrv}
\bibliography{ji-tjp-biblio}

\end{document}